\documentclass[11pt]{article}
\usepackage{amsmath, amssymb, amsthm, fullpage}
\usepackage{tikz}
\oddsidemargin  0pt
\evensidemargin 0pt
\marginparwidth 40pt
\marginparsep 10pt
\topmargin 10pt
\headsep 10pt
\textheight 8.7in
\textwidth 6.6in

\date{}
\title{\vspace{-1cm} $K_{s,t}$-saturated bipartite graphs
}
\author{
Wenying Gan \thanks{Department of Mathematics, ETH, 8092 Zurich, Switzerland. Email: wenying.gan@math.ethz.ch.}
\and
D\'aniel Kor\'andi \thanks{Department of Mathematics, ETH, 8092 Zurich, Switzerland. Email: daniel.korandi@math.ethz.ch.}
\and
Benny Sudakov \thanks{Department of Mathematics, ETH, 8092 Zurich, Switzerland and Department of Mathematics, UCLA, Los Angeles, CA 90095.
Email: benjamin.sudakov@math.ethz.ch. Research supported in part by SNSF grant 200021-149111 and by a USA-Israel BSF grant.}
}

\theoremstyle{plain}
\newtheorem{THM}{Theorem}[section]
\newtheorem*{THM*}{Theorem}
\newtheorem{PROP}[THM]{Proposition}
\newtheorem{LEMMA}[THM]{Lemma}

\newtheorem{COR}[THM]{Corollary}
\newtheorem{CLAIM}[THM]{Claim}
\newtheorem{CONJ}[THM]{Conjecture}

\theoremstyle{definition}

\newtheorem*{example}{Example}

\newcommand{\floor}[1]{\left\lfloor #1 \right\rfloor}

\newcommand{\subs}{\subseteq}

\begin{document}
\maketitle

\begin{abstract}

An $n$-by-$n$ bipartite graph is $H$-saturated if the addition of any missing edge between its two parts creates a new copy 
of $H$. In 1964, Erd\H{o}s, Hajnal and Moon made a conjecture on the minimum number of edges in a $K_{s,s}$-saturated 
bipartite graph. This conjecture was proved independently by Wessel and Bollob\'as in a more general, but ordered, setting: 
they showed that the minimum number of edges in a $K_{(s,t)}$-saturated bipartite graph is $n^2-(n-s+1)(n-t+1)$, where 
$K_{(s,t)}$ is the ``ordered'' complete bipartite graph with $s$ vertices in the first color class and $t$ vertices in the 
second. However, the very natural question of determining the minimum number of edges in the unordered $K_{s,t}$-saturated 
case remained unsolved. This problem was considered recently by Moshkovitz and Shapira who also
conjectured what its answer should be. In this short paper we 
give an asymptotically tight bound on the minimum number of edges in a $K_{s,t}$-saturated bipartite graph, which is only 
smaller by an additive constant than the conjecture of Moshkovitz and Shapira. We also prove their conjecture for 
$K_{2,3}$-saturation, which was the first open case.

\end{abstract}

\section{Introduction} \label{sec:introduction}

For two graphs $G$ and $H$, $G$ is said to be $H$-saturated if it contains no copy of $H$ as a subgraph, but the addition of any edge missing from $G$ creates a copy of $H$ in $G$. The saturation number $sat(n,H)$ is defined as the minimum number of edges in an $H$-saturated graph on $n$ vertices. Notice that the problem of finding the saturation number for $H$ is, in some sense, the dual of the classical Tur\'an problem.

Probably the most natural setup of this problem is when we choose $H$ to be a fixed complete graph $K_s$. This was first studied by Zykov \cite{Z49} in the 1940's, and later by Erd\H{o}s, Hajnal and Moon \cite{EHM64} in 1964. They proved that $sat(n,K_s)= (s-2)n - \binom{s-1}{2}$. Here the upper bound comes from the $K_s$-saturated graph that has $s-2$ vertices connected to all other vertices. Later, the closely related notion of weak saturation was introduced by Bollob\'as \cite{B68}. A graph $G$ is weakly $H$-saturated if it is possible to add back the missing edges of $G$ one by one in some order, so that each addition creates a new copy of $H$. Trivially, if $G$ is $H$-saturated, then any order satisfies this property, hence $G$ is also weakly $H$-saturated. Let $w$-$sat(n,K_s)$ be the minimum number of edges in an $n$-vertex graph that is weakly $K_s$-saturated. We then have $w$-$sat(n,K_s) \leq sat(n,K_s)$. Somewhat surprisingly, one can prove using algebraic techniques (see e.g. \cite{L77}) that these two functions are actually equal. On the other hand, the extremal graphs for these problems are not the same and already for $s=3$ there are weakly $K_3$-saturated graphs (i.e. trees) which are not $K_3$-saturated.

The paper by Erd\H{o}s, Hajnal and Moon also introduced the bipartite saturation problem, where we are looking for the minimum number of edges $sat(K_{n,n},H)$ in an $H$-free $n$-by-$n$ bipartite graph, such that adding any missing edge between the two color classes creates a new copy of $H$. (Of course, this definition is only meaningful if $H$ is also bipartite.) They conjectured that $sat(K_{n,n},K_{s,s}) = n^2 - (n-s+1)^2$. Once again, this is seen to be tight by selecting $s-1$ vertices on each side of the bipartite graph and connecting them to every vertex on the opposite side.  In the bipartite setting, one can impose an additional restriction on the problem by ordering the two vertex classes of $H$ and requesting that each missing edge create an $H$ respecting the order: the first class of $H$ lies in the first class of $G$. For example, let $K_{(s,t)}$ be the complete ``ordered'' $s$-by-$t$ bipartite graph with $s$ vertices in the first class and $t$ vertices in the second, then a bipartite graph $G$ is $K_{(s,t)}$-saturated if each missing edge creates a $K_{s,t}$ with the $s$-vertex class lying in the first class of $G$. Indeed, the conjecture of Erd\H{o}s, Hajnal and Moon was independently confirmed by Wessel \cite{W66} and Bollob\'as \cite{B67} a few years later as the special case of the following result: $sat(K_{(n,n)},K_{(s,t)}) = n^2-(n-s+1)(n-t+1)$. This was further generalized in the 80s by Alon \cite{A85} to complete $k$-uniform hypergraphs in a $k$-partite setting using algebraic tools.
Alon showed that the saturation and weak saturation bounds are the same in this case as well. For a more detailed discussion of $H$-saturation in general, we refer the reader to the survey \cite{FFS} by Faudree, Faudree and Schmitt.

In this paper we study the unordered case of bipartite saturation. Although this is arguably the most natural setting for the bipartite problem, it did not receive any attention until very recently in \cite{BBMR, MS12}. Moshkovitz and Shapira \cite{MS12} studied the unordered weak saturation number of $K_{s,t}$, $s \le t$, and showed that $w$-$sat(K_{n,n},K_{s,t}) = (2s-2+o(1))n$. Note that, surprisingly, it is much smaller than the corresponding ordered saturation number and only depends on the size of the smaller part. One might think that a similar gap exists for saturation numbers as well.
Moshkovitz and Shapira \cite{MS12} conjectured that this is not the case, and that ordered and unordered bipartite saturation numbers differ only by an additive constant. More precisely, they made the following conjecture and constructed an example showing that, if true, this bound is tight.

\begin{CONJ}\label{conj:main}
Let $1\le s\le t$ be integers. Then there is an $n_0$ such that if $n\ge n_0$ and $G$ is a $K_{s,t}$-saturated $n$-by-$n$ bipartite graph, then $G$ contains at least $(s+t-2)n- \floor{ \left( \frac{s+t-2}{2} \right)^2 }$ edges.
\end{CONJ}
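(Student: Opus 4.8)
The plan is to prove the lower bound; the matching upper bound is the construction of Moshkovitz and Shapira. Let $A,B$ be the two color classes (each of size $n$), write $d(v)$ for degrees, and set $m=s+t-2$. First I would read local information off saturation. For a non-edge $ab$ with $a\in A,\ b\in B$, the copy of $K_{s,t}$ created in $G+ab$ contains $ab$, so $a$ and $b$ lie in opposite classes of the copy; inspecting the two orientations shows that every non-edge satisfies \emph{either} $d(a)\ge t-1$ and $d(b)\ge s-1$, \emph{or} $d(a)\ge s-1$ and $d(b)\ge t-1$. Hence both endpoints have degree at least $s-1$ and at least one has degree at least $t-1$; since $n$ is large, a vertex of degree at most $s-2$ would have a non-neighbor and contradict this, so $\delta(G)\ge s-1$. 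Calling $v$ \emph{low} when $d(v)\le t-2$, every non-edge has a high endpoint, so the low sets $L_A\subs A$ and $L_B\subs B$ have no non-edge between them; thus $L_A$ is complete to $L_B$ and $\card{L_A},\card{L_B}\le t-2$. All but boundedly many vertices are therefore high, and summing degrees over $A$ already gives $e(G)\ge(t-1)n-O(1)$.

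The heart of the matter is to recover the remaining $(s-1)n$ edges, for which one must see where the common neighbors demanded by saturation live. Fix a vertex $a\in A$ of degree exactly $t-1$ and a non-neighbor $b$. In the orientation that places $a$ on the $s$-side, the created copy forces $s-1$ vertices adjacent to $b$ and to all of $N(a)$, i.e. lying in $W:=\bigcap_{b'\in N(a)}N(b')$; moreover $K_{s,t}$-freeness forbids any $b'\notin N(a)$ from being adjacent to $s$ vertices of $W$, since those would complete a $K_{s,t}$ on $N(a)\cup\{b'\}$. Thus every orientation-one non-neighbor of $a$ is adjacent to exactly $s-1$ vertices of $W$, so these vertices carry about $(s-1)n$ edges and, being few and of near-full degree, contribute genuinely new edges beyond the $(t-1)n$ baseline. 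The subtlety is that for other non-neighbors $b$ only the opposite orientation is available, placing $a$ on the $t$-side and forcing \emph{$B$-dominating} helpers instead; those non-neighbors feed the symmetric count on the $B$ side rather than populating $W$.

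This dichotomy is exactly what produces the constant $\floor{(m/2)^2}$. The extremal configuration has a family of $p$ near-dominating vertices in $A$ and $q$ in $B$, with edge count $pn+qn-pq+O(1)$; $K_{s,t}$-freeness caps each family (no $s$ of them may have $t$ common neighbors), while saturation forces $p+q\ge m$ so that every non-edge can be completed in one of the two orientations. Were one orientation used for all non-edges one would be driven to the lopsided ordered optimum with loss $(s-1)(t-1)$; allowing a saturated graph to split its non-edges between the two orientations lets it balance the two families, and minimizing $pn+qn-pq$ over admissible splits gives the balanced choice $p=\floor{m/2}$, $q=\ceil{m/2}$ and the loss $pq=\floor{(m/2)^2}$. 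The lower bound amounts to showing that no saturated graph can beat this balance.

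The hard part will be making the coupling of the two orientations quantitative. One must bound the relevant common neighborhoods $W$ (equivalently, control the non-edges joining two high vertices, whose endpoints may have degree as low as $t-1$ and hence deficiency as large as $n-t+1$, so degree information alone is silent), and then run a stability argument pinning the two near-dominating families down to bounded error. I expect this to yield $e(G)\ge mn-O(1)$ in reasonable generality, but extracting the precise constant $\floor{(m/2)^2}$ requires classifying all optimal configurations and is most tractable in small cases; $K_{2,3}$, where the forbidden pattern merely says that no two vertices on a side have three common neighbors, is the natural first instance in which the complete extremal analysis can be pushed through.
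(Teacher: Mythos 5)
The first thing to observe is that the statement you are proving is presented in the paper as a \emph{conjecture}: the paper does not prove it. What it establishes is the weaker bound $(s+t-2)n-(s+t-2)^2$ (Theorem~\ref{THMmain}) plus the first open case $K_{2,3}$, and your proposal, read honestly, is in the same position --- it is a plan whose decisive steps are deferred. Your opening observations are correct and match the paper's starting point (minimum degree at least $s-1$; for every non-edge at least one endpoint has degree at least $t-1$; the low-degree vertices of the two sides span a complete bipartite graph), and your analysis of a degree-$(t-1)$ vertex $a$ is sound: the $t$-class of any copy created through $a$ on the $s$-side must be $N(a)\cup\{b\}$, so such a non-neighbour $b$ has exactly $s-1$ neighbours in $W=\bigcap_{b'\in N(a)}N(b')$. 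But the step ``so these vertices carry about $(s-1)n$ edges \dots beyond the $(t-1)n$ baseline'' does not work as written: those $s-1$ edges per vertex run from $B$ into $W\subseteq A$, and your baseline was obtained by summing degrees over $A$, so they are already counted there. Nor is it justified that $W$ is ``few and of near-full degree'': $K_{s,t}$-freeness bounds the common neighbourhood of $s$ vertices, not of the $t-1$ vertices of $N(a)$, and since \emph{every} non-neighbour of $a$ sends at most $s-1$ edges into $W$, a large $W$ would in fact have \emph{low} average degree. Avoiding exactly this double count is what forces the paper's elaborate bookkeeping: it grows a shell $\tilde A$ from a small core by absorbing vertices with $t-1$ neighbours inside it, splits the remainder into $B,C$ and then $B_1,B_2,C_1,C_2$ on each side, proves that $C_2$ and $C_2'$ span a complete bipartite graph, and only then counts $t-1$ edges per vertex from one side and $s-1$ provably new edges per vertex from the other.

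The second, larger gap is the one you name yourself: ``making the coupling of the two orientations quantitative.'' Your heuristic for the constant $\floor{\left(\frac{s+t-2}{2}\right)^2}$ --- two near-dominating families of sizes $p,q$ with $p+q\ge s+t-2$ and edge count $pn+qn-pq+O(1)$ --- correctly describes the extremal examples of Section~\ref{sec:example}, but no argument is given that a general saturated graph contains such families, and the proposed ``stability argument pinning the two families down to bounded error'' is precisely the open difficulty: even the paper's method loses a constant of order $(x_0+x_0')(t-1)+\floor{(s-1)^2/4}$ against a gain of only $st-1$, and recovers the exact constant only for $K_{2,3}$ after a dedicated case analysis. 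So the proposal is a reasonable research programme, but it contains no proof of the stated bound --- not even of the weaker $(s+t-2)n-O(1)$ version that the paper does prove.
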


In this short paper we prove the following result which confirms the above conjecture up to a small additive constant.
\begin{THM}\label{THMmain}
Let $1\leq  s\leq t$ be fixed and $n\ge t$. Then
\[ sat(K_{n,n},K_{s,t}) \ge (s+t-2)n - (s+t-2)^2.\]
\end{THM}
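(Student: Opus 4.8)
The plan is to work with the two colour classes $X,Y$ (each of size $n$), translate the hypotheses into local degree information, and then extract the bound by a global edge count anchored at a vertex of minimum degree. Write a non-edge as $uv$ with $u\in X$, $v\in Y$; the copy of $K_{s,t}$ created by adding $uv$ must contain $uv$, so $G$ contains a copy of $K_{s,t}$ minus the edge $uv$, with $u,v$ the two deficient vertices. Splitting according to which class holds the $s$-element side gives a dichotomy: in Type~A the $s$-side lies in $X$, forcing $d(u)\ge t-1$ and $d(v)\ge s-1$; in Type~B the $s$-side lies in $Y$, forcing $d(u)\ge s-1$ and $d(v)\ge t-1$. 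Two facts follow at once: every vertex has degree at least $s-1$ (a vertex of degree $\le s-2$ has a non-neighbour, and either orientation forces degree $\ge s-1$), and for every non-edge at least one endpoint has degree at least $t-1$. Equivalently, the "poor" vertices (degree at most $t-2$) of $X$ and of $Y$ induce a complete bipartite graph, so if both classes contain poor vertices then each contains at most $t-2$ of them. I would also record $K_{s,t}$-freeness as two co-degree bounds: every $s$-subset of a class has at most $t-1$ common neighbours, and every $t$-subset has at most $s-1$.

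Next comes the reduction. Let $v$ be a vertex of globally minimum degree $\delta$, say $v\in Y$ (by the $X\leftrightarrow Y$ symmetry of the unordered problem). If $\delta\ge s+t-2$ then $e(G)\ge \delta n\ge (s+t-2)n$ and we are done, so assume $s-1\le\delta\le s+t-3$. The cleanest sub-case is $\delta\le t-2$, so that $v$ is poor: then every non-neighbour $u\in X$ of $v$ must place $v$ on the $t$-side (Type~A), whence $d(u)\ge t-1$. Writing $W=N(v)$ with $\card{W}=\delta$ and $U=X\setminus W$, this gives $\sum_{u\in U}d(u)\ge (t-1)(n-\delta)$, so from $e(G)=\sum_{u\in U}d(u)+\sum_{w\in W}d(w)$ the whole theorem reduces to the single estimate $\sum_{w\in W}d(w)\ge (s-1)n - O\big((s+t)^2\big)$; indeed this yields $e(G)\ge (s+t-2)n-(t-1)\delta-O((s+t)^2)$, and $(t-1)\delta\le (t-1)(t-2)$ is absorbed into the additive constant.

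The heart of the argument is therefore to show that the at most $t-2$ neighbours of $v$ carry almost $(s-1)n$ edges, i.e.\ that almost every $y\in Y$ has at least $s-1$ neighbours in $W$. Here I would exploit the witness structure: for each $u\in U$ the created copy supplies an $(s-1)$-subset $W_u\subseteq W$ (the "partners" of $u$, which are adjacent to $v$) together with the set $N(\{u\}\cup W_u)$ of exactly $t-1$ vertices of $Y$, each adjacent to all of $W_u$ and hence having at least $s-1$ neighbours in $W$. Thus every vertex of $\bigcup_{u\in U}N(\{u\}\cup W_u)$ is "good", and the task is to bound the number of "bad" $y$ (those with at most $s-2$ neighbours in $W$). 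Since $\delta\ge s-1$, each bad $y$ is a non-neighbour of some $w\in W$, and I would feed the non-edge $wy$ back into the dichotomy and use the co-degree bounds to show bad vertices are few, uniformly over the at most $\binom{t-2}{s-1}$ possible partner sets. I expect this counting — converting the partner/co-neighbour structure into a near-linear lower bound on the edges incident to $W$ — to be the main obstacle, precisely because the naive min-degree averaging only ever delivers $(t-1)n$, so all of the extra $(s-1)n$ must be squeezed out at $W$.

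Finally I would treat the remaining range $t-1\le\delta\le s+t-3$, where $v$ is not poor and both orientations can occur at its non-neighbours. Here the analogous count is run with $v$ itself contributing $t-1$ to the small side under its Type~B non-edges while its Type~A non-neighbours again contribute $t-1$ on the $X$-side, and the same co-degree estimates control the overlap between the two witness neighbourhoods of a vertex that appears in both roles; the regime in which all degrees are at least $t-1$ is exactly this case applied to the minimum-degree vertex. The losses in all these steps are $O((s+t)^2)$, giving the stated additive constant $(s+t-2)^2$ rather than the conjecturally optimal $\floor{((s+t-2)/2)^2}$.
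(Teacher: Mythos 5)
Your preliminary observations (the Type~A/Type~B dichotomy, minimum degree at least $s-1$, the complete bipartite graph on poor vertices, the two co-degree bounds, and the trivial case $\delta\ge s+t-2$) are all correct. The gap is in the reduction at the heart of your case $\delta\le t-2$. You claim the theorem reduces to the single estimate $\sum_{w\in W}d(w)\ge (s-1)n-O((s+t)^2)$ for $W=N(v)$, i.e.\ that all but boundedly many vertices of $Y$ send at least $s-1$ edges into the fixed set $W$ of size $\delta\le t-2$. This is much stronger than anything the saturation hypothesis gives you, and the repair you sketch --- feeding a non-edge $wy$ back into the dichotomy --- does not work: adding $wy$ only produces a witness copy whose relevant $(s-1)$-set lies somewhere in $X$, with no reason to lie in $W$, so a vertex of $Y$ far from $v$'s witness structure need not send \emph{any} edge into $W$. (Already for $K_{2,4}$ with $\delta=2$ your estimate would force one of the two neighbours of $v$ to have degree at least $n/2-O(1)$, which does not follow from anything you have established.) The paper's Proposition~2.1 avoids exactly this trap: it proves the ``$s-1$ neighbours in $N(v)$'' statement only for the union $V$ of the witness $t$-classes, and handles $Y\setminus V$ by the trivial bound $d\ge\delta\ge s-1$ into \emph{all} of $X$, noting that the three edge sets $V\times W$, $(X\setminus W)\times V$ and $(Y\setminus V)\times X$ are pairwise disjoint. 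Your single-side decomposition $e(G)=\sum_{u\in X\setminus W}d(u)+\sum_{w\in W}d(w)$ discards the option of collecting the $(s-1)n$ term on the $Y$-side, and cannot be completed as stated.

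The remaining range $t-1\le\delta\le s+t-3$ is a second, larger gap: you dismiss it with ``the same co-degree estimates control the overlap,'' but this is the case that occupies essentially all of Section~2 of the paper, and a single minimum-degree vertex is not a sufficient anchor there. The paper first finds \emph{two} non-adjacent low-degree vertices $u_0\in U$ and $u_0'\in U'$ (the second exists only because of the contradiction hypothesis on $e(G)$), takes the core $A_0=\{u_0\}\cup N(u_0')$, $A_0'=\{u_0'\}\cup N(u_0)$, grows a shell by repeatedly absorbing vertices with $t-1$ neighbours in it, and then runs a delicate partition of the remainder ($B_1,B_2,C_1,C_2$ on each side) whose key point is that $C_2\times C_2'$ induces a complete bipartite graph; only then do the $(t-1)$-counts from one side and the $(s-1)$-counts from the other provably avoid overlap. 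None of these ideas appear in your sketch, and the ``overlap between the two witness neighbourhoods'' you mention is precisely where a naive count double-counts or falls short. So while your opening analysis is sound, both main cases are missing the arguments that make the edge count close.
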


\noindent
The proof is presented in Section \ref{sec:main}. In Section \ref{sec:example}, we show that if the conjecture is true, it has many extremal examples. Finally,
in Section \ref{sec:case}, we prove Conjecture \ref{conj:main} in the first open case of $K_{2,3}$-saturation.

\section{Lower bounds on the saturation number}  \label{sec:main}

Let $G$ be a bipartite graph with vertex class $U$ and $U'$ of size $n$. Assume $1\le s\le t\le n$ and suppose $G$ is $K_{s,t}$-saturated, i.e. each missing edge between $U$ and $U'$ creates a new $K_{(s,t)}$ or a new $K_{(t,s)}$ when added to $G$. Here $K_{(a,b)}$ refers to a complete bipartite graph with $a$ vertices in $U$ and $b$ vertices in $U'$.

Let us start with the following, easy special case of Theorem \ref{THMmain}.

\begin{PROP} \label{prop:smalldeg}
Suppose a $K_{s,t}$-saturated graph has minimum degree $\delta<t-1$. Then it contains at least $n(t+s-2) - (s+t-2)^2$ edges.
\end{PROP}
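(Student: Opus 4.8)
The plan is to pick a minimum-degree vertex, show that the orientation of the copy of $K_{s,t}$ created at each missing edge through it is forced, and then argue that the few neighbours of that vertex must account for almost all of the ``extra'' edges beyond the trivial bound.

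Concretely, let $v$ be a vertex of minimum degree $\delta$; since $K_{s,t}$-saturation is symmetric in $U$ and $U'$, I may assume $v\in U'$, and set $W=N(v)\subseteq U$, so that $|W|=\delta\le t-2$. For a non-neighbour $u\in U\setminus W$, adding $uv$ must create a new $K_{(s,t)}$ or $K_{(t,s)}$. A copy of $K_{(t,s)}$ would require $v$ to be adjacent to the $t-1$ vertices of its $U$-side other than $u$, giving $\deg(v)\ge t-1$ and contradicting $\delta\le t-2$; hence the new copy is a $K_{(s,t)}$ with $u$ on the size-$s$ side in $U$ and $v$ on the size-$t$ side in $U'$. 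This already yields that every $u\in U\setminus W$ has $\deg(u)\ge t-1$, and that $\delta\ge s-1$ (otherwise no missing edge at $v$ could create any copy, contradicting saturation). Moreover each such $u$ comes with a witness $A_u\subseteq W$ with $|A_u|=s-1$ and a set $B_u\subseteq N(u)$ with $|B_u|=t-1$, such that every vertex of $A_u$ is adjacent to every vertex of $\{v\}\cup B_u$.

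Next I would write $e(G)=\sum_{u\in W}\deg(u)+\sum_{u\in U\setminus W}\deg(u)\ge e(W,U')+(n-\delta)(t-1)$, and a short computation shows the claimed bound follows once $e(W,U')\ge (s-1)n-c$ for a constant $c=c(s,t)$ (using $\delta\le t-2$, the additive $(s+t-2)^2$ comfortably absorbs such constants). Since $|W|\le t-2$ is bounded, this says that the few neighbours of $v$ together carry roughly $(s-1)n$ edges; I would reformulate it as the statement that all but $O(1)$ vertices $w'\in U'$ satisfy $|N(w')\cap W|\ge s-1$. The witnesses give this for part of $U'$ for free: if $w'\in B_u$ for some non-neighbour $u$, then $A_u\subseteq N(w')\cap W$, so $w'$ has at least $s-1$ neighbours in $W$. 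Thus it suffices to show that the witness sets $\bigcup_u B_u$ cover all but a bounded number of vertices of $U'$.

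This coverage step is the main obstacle, and it is where both remaining hypotheses must enter. The missing edges through $v$ alone do \emph{not} force the sets $B_u$ to spread out: for instance when $s=2$, $K_{s,t}$-freeness only forces the $B_u$ to be distinct, which yields $|\bigcup_u B_u|$ of order merely $\sqrt{n}$, far too weak. The plan is therefore to invoke saturation at the \emph{other} low-degree vertices together with $K_{s,t}$-freeness. A vertex $u$ with $\deg(u)=t-1$ has $N(u)=B_u$ exactly, so for every $w'\in U'\setminus N(u)$ the copy created by adding $uw'$, combined with the fact that any $s$ vertices of $U$ have at most $t-1$ common neighbours (and any $t$ of them at most $s-1$), constrains the common neighbourhoods of the witness pairs and forces them to be nearly complete to $U'$; propagating this back should leave only boundedly many vertices of $U'$ without $s-1$ neighbours in $W$. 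Ruling out a sparse, spread-out configuration that is nonetheless consistent with the local structure at $v$ is exactly the crux, and I expect the bookkeeping needed to turn the above local constraints into the global $O(1)$ bound to be the most delicate part of the argument.
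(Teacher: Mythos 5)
Your setup is sound and matches the paper's opening moves: the orientation of the copy created by adding $uv$ is indeed forced, every non-neighbour of $v$ gets degree at least $t-1$, and the witnesses $A_u\subseteq W$, $B_u\subseteq N(u)$ are exactly the objects the paper uses. The gap is in the final count. You reduce the proposition to the claim that $e(W,U')\ge (s-1)n-c$, i.e.\ that all but $O(1)$ vertices of $U'$ have at least $s-1$ neighbours in $W$, and you correctly identify that the local structure at $v$ only forces this for the vertices of $\bigcup_u B_u$, which (as your own $s=2$ computation shows) may have size as small as $\Theta(\sqrt n)$. The proposed rescue --- propagating saturation and $K_{s,t}$-freeness from other low-degree vertices --- is not carried out, and since the admissible error is only an additive constant $(s+t-2)^2$, you would need to recover almost all of the $(s-1)n$ missing edges this way; nothing in the sketch indicates how. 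As written, the proof is incomplete at the step you yourself flag as the crux.

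The way out is to notice that the coverage claim is not needed. Set $V'=\bigcup_{u\in U\setminus W}B_u\subseteq U'$ and partition the edge set as
\[
e(U,U') \;=\; e\bigl(W,V'\bigr)\;+\;e\bigl(U\setminus W,\,V'\bigr)\;+\;e\bigl(U,\,U'\setminus V'\bigr).
\]
Each vertex of $V'$ has at least $s-1$ neighbours in $W$ (namely $A_u$), so the first term is at least $(s-1)|V'|$; each $u\in U\setminus W$ has its $t-1$ witnesses $B_u$ inside $V'$, so the second term is at least $(t-1)(n-\delta)$; and each vertex of $U'\setminus V'$ has degree at least $\delta\ge s-1$ \emph{somewhere}, which suffices because those edges are incident to $U'\setminus V'$ and hence disjoint from the first two terms. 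Summing gives $(s-1)n+(t-1)(n-\delta)\ge n(s+t-2)-(t-1)(t-2)$, which is the paper's Proposition~\ref{prop:smalldeg}. In other words: count $s-1$ edges per vertex of $U'$ rather than $s-1$ edges per vertex of $U'$ \emph{into $W$}, using the minimum degree for the vertices outside $V'$; the disjointness then comes for free from the partition, and no global coverage statement is required.
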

\begin{proof}
The $K_{s,t}$-saturated property ensures that each vertex has at least $s-1$ neighbors, so we actually have $s-1 \le \delta < t-1$. Let $u_0$ be a vertex of degree $\delta$; we may assume that $u_0\in U$. Then adding any missing edge $u_0u'$ to $G$ (where $u'\in U'-N(u_0)$) should create a new $K_{(t,s)}$ because it cannot create a $K_{(s,t)}$. For such a $u'$, let $S_{u'} \subs U$ be the $t-1$ vertices other than $u_0$ in the $t$-class of this $K_{(t,s)}$, and define $V\subs U$ to be the union of these $S_{u'}$. Then all vertices in $V$ have at least $s-1$ neighbors in $N(u_0)$ and all vertices in $U' - N(u_0)$ have at least $t-1$ neighbors in $V$. Now we can count the number of edges in $G$ as follows:
\begin{align*}
e(U,U') &= e(V,N(u_0)) + e(V,U' - N(u_0)) + e(U - V, U')\\
&\ge (s-1)|V| + (t-1)(n-|N(u_0)|) + \delta(n-|V|) \\
&\ge (s-1)|V| + (t-1)(n-t+2) + (s-1)(n-|V|) \\
&\ge n(t+s-2) - (t-1)(t-2)\\
&\ge n(t+s-2) - (s+t-2)^2.
\end{align*}
\end{proof}

The case when $\delta\ge t-1$ is considerably more complicated. We introduce the following structure to count the edges of $G$ (see Figure 1). The \emph{core} of this structure is a set $\tilde{A_0}=A_0\cup A_0'$ with $A_0\subs U$ and $A_0'\subs U'$ satisfying the following technical property:
\begin{itemize}
  \item there are vertices $u_0\in A_0$ and $u_0'\in A_0'$ such that their neighborhoods are also contained in the core.
\end{itemize}

Next, we build the shell around the core: starting with $\tilde{A}=\tilde{A_0}$, we iteratively add any vertex $v$ to $\tilde{A}$ that has at least $t-1$ neighbors in it. In other words, $\tilde{A}=A\cup A'$ is the smallest set containing $\tilde{A_0}$ such that any vertex $v\in G-\tilde{A}$ has fewer than $t-1$ neighbors in $\tilde{A}$. Here $A_0\subs A\subs U$ and $A_0'\subs A'\subs U'$. We use the variables $x_0=|A_0|$, $x_0'=|A_0'|$, $x=|A|$ and $x'=|A'|$ to denote the sizes of the corresponding sets. Obviously $x_0\le x$ and $x_0'\le x'$.

\medskip

The following, rather scary, lemma is the key to our lower bounds on the saturation numbers. It shows that we can find about $n(s+t-2)$ edges in a $K_{s,t}$-saturated graph, provided we have a small enough core.

\begin{figure}[t!]
\begin{center}
\begin{tikzpicture}

\draw (2,3.5) ellipse (1.5cm and 0.75cm);
\node at (2,2) {$\boldsymbol{A'}$};

\draw (2,6) ellipse (1.5cm and 0.75cm);
\node at (2,7.5) {$\boldsymbol{A}$};

\draw [fill] (0.64, 3.5) circle (0.04);
\node[left] at (0.6,3.5) {$u_0'$};

\draw [fill] (0.64, 6) circle (0.04);

\node[left] at (0.6,6) {$u_0$};

\draw (1.5, 3.5) ellipse (1cm and 0.5cm);
\node at (1.5, 3.5) {$\boldsymbol{A_0'}$};
\draw (1.5, 6) ellipse (1cm and 0.5cm);
\node at (1.5, 6) {$\boldsymbol{A_0}$};

\draw (7, 3.5) ellipse (3cm and 0.75cm);
\draw (7, 2.75)--(7, 4.25);

\node at (5.9,3.5) {$\boldsymbol{B_1'}$};

\node at (8.1,3.5) {$\boldsymbol{B_2'}$};

\node at (7, 2) {$\boldsymbol{B'}$};

\draw (7, 6) ellipse (3cm and 0.75cm);
\draw (7, 5.25)--(7, 6.75);

\node at (5.9,6.2) {$\boldsymbol{B_1}$};
\node at (5.9,5.7) {\footnotesize $d_{B'}\ge t-s$};

\node at (8.1,6.2) {$\boldsymbol{B_2}$};
\node at (8.1,5.7) {\footnotesize $d_{B'} < t-s$};

\node at (7,7.5) {$\boldsymbol{B}$};
\node at (7,7) {\footnotesize $s-1\leq d_{A'}<t-1$};

\draw (13.5,3.5) ellipse (3cm and 0.75cm);
\draw (13.5,2.75)--(13.5,4.25);
\node at (12.2,3.5) {$\boldsymbol{C_1'}$};
\node at (14.8,3.5) {$\boldsymbol{C_2'}$};
\node at (13.5,2) {$\boldsymbol{C'}$};

\draw (13.5, 6) ellipse (3cm and 0.75cm);
\draw (13.5, 5.25)--(13.5, 6.75);
\node at (12.2,6.2) {$\boldsymbol{C_1}$};
\node at (12.2,5.7) {\footnotesize $d_{V-B_2'}\ge s-1$};
\node at (14.8,6.2) {$\boldsymbol{C_2}$};
\node at (14.8,5.7) {\footnotesize $d_{V-B_2'}<s-1$};
\node at (13.5,7.5) {$\boldsymbol{C}$};
\node at (13.5,7) {\footnotesize $d_{A'}<s-1$};

\end{tikzpicture}
\caption{the structure for counting the edges}
\end{center}
\end{figure}
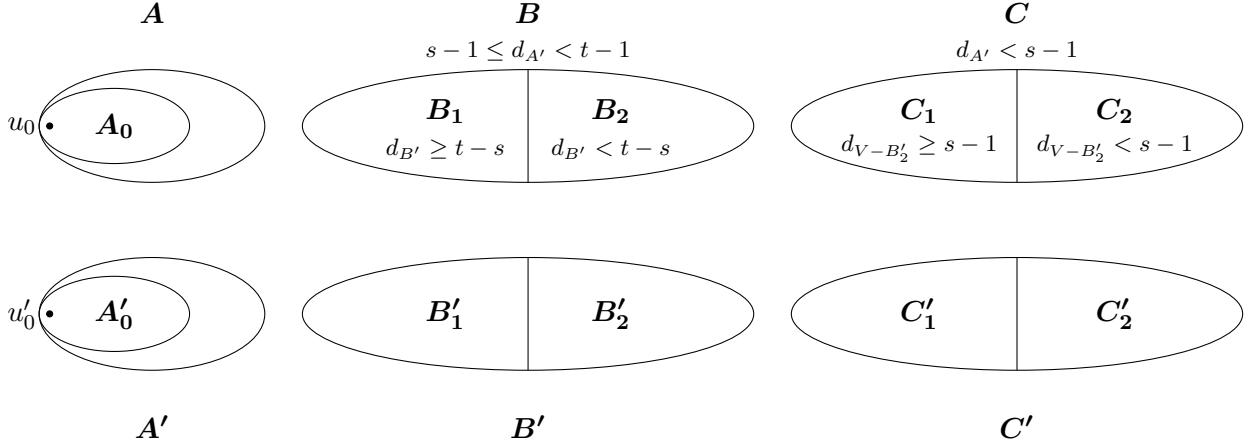

\begin{LEMMA} \label{lem:main}
Assuming $\delta\ge t-1$, suppose the core spans $e=e(A_0,A_0')$ edges. Then $G$ has at least
\[ n(s+t-2) - (x_0+x_0')(t-1) - \floor{\frac{(s-1)^2}{4}} + e + \min\{(t-s)x,(t-s)x'\} \]
edges.
\end{LEMMA}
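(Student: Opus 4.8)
The plan is to fix the shell structure from the statement and count the edges of $G$ according to whether each endpoint lies in the shell, writing
\[ e(G)=e(A,A')+e(A,U'\setminus A')+e(U\setminus A,A')+e(U\setminus A,U'\setminus A'). \]
The first group I would bound using the greedy way the shell is built from the core: each of the $(x-x_0)+(x'-x_0')$ vertices absorbed into the shell had, at the moment it was absorbed, at least $t-1$ neighbours already inside, and charging each such edge to the later-absorbed of its two endpoints shows these edges are distinct and lie entirely inside the shell. On top of the $e$ edges of the core this gives
\[ e(A,A')\ge e+(t-1)\big((x-x_0)+(x'-x_0')\big), \]
which is exactly the source of the terms $+e-(t-1)(x_0+x_0')$ together with $(t-1)(x+x')$ worth of edges. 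This greedy count — making a large shell expensive — is the backbone of the whole bound.

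For the remaining three groups I would classify each non-shell vertex by its degree into the opposite shell: a vertex $v\in U\setminus A$ goes into $B$ if it has at least $s-1$ neighbours in $A'$ and into $C$ otherwise, and $B',C'$ are defined symmetrically. Note that $C$-vertices have at most $s-2$ neighbours in $A'$ but, by the case hypothesis $\delta\ge t-1$, still have total degree at least $t-1$. The $B$- and $B'$-vertices are the easy part: each contributes at least $s-1$ edges to the opposite shell, accounting for $(s-1)(|B|+|B'|)$ edges inside $e(U\setminus A,A')$ and $e(A,U'\setminus A')$. Combining this with the shell bound and the trivial estimate $\sum_{v'\in U'\setminus A'}d(v')\ge (t-1)(n-x')$ for the degrees on one side, one recovers a bound of the correct shape $n(s+t-2)$ up to the core correction on every vertex except the $C$- and $C'$-vertices, so the whole difficulty is concentrated in paying $s-1$ for each $C$-vertex and $t-1$ for each $C'$-vertex without reusing any edge.

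The leverage for the $C$- and $C'$-vertices comes from the saturation property applied at the special core vertices. For $u'\notin N(u_0)$, adding $u_0u'$ must create a new $K_{(s,t)}$ or $K_{(t,s)}$; since $N(u_0)\subseteq A_0'$, a short case analysis shows that whenever $u'$ has fewer than $s-1$ neighbours in $A$ (that is, $u'\in C'$) the new copy must be a $K_{(t,s)}$, forcing $u'$ to have at least $t-1$ neighbours each of which has at least $s-1$ neighbours in $A_0'$; in particular those neighbours lie in $A\cup B$. The symmetric statement at $u_0'$ controls the $C$-vertices. This is what lets me route the edges of the $C,C'$ vertices into $e(U\setminus A,A')$ and $e(U\setminus A,U'\setminus A')$ in a controlled way, and the refinement of $B$ into $B_1$ (at least $t-s$ neighbours in $B'$) and $B_2$, and of $C$ into $C_1,C_2$, is precisely the bookkeeping that assigns each such edge to a single vertex. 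Carrying this out also produces the surplus $(t-s)\min\{x,x'\}$, the minimum reflecting that the surplus degree of the shell can be cashed in on only one of the two sides.

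The main obstacle I expect is the final, double-counting-free accounting for the $C$ and $C'$ vertices: their edges to the opposite shell are too few (at most $s-2$) to pay the required $s-1$ directly, so the deficit has to be made up from their edges into the opposite non-shell, while those very edges are simultaneously needed to pay the $t-1$ charge of the $C'$-vertices. Balancing these competing demands through the $B_1/B_2$ and $C_1/C_2$ split, and then optimizing over the worst possible overlap, is what yields the additive loss: the quantity $\floor{(s-1)^2/4}=\max_{a+b=s-1}ab$ enters as the largest possible value of a product of two counts whose sum is at most $s-1$. Pinning down that this is exactly the extremal overlap, and that no larger loss can occur, is the most delicate step of the argument.
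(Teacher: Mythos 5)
Your outline reproduces the paper's architecture almost exactly: the greedy shell count $e(A,A')\ge e+(t-1)(x+x'-x_0-x_0')$, the classification of non-shell vertices into $B,C$ (resp.\ $B',C'$) by degree into the opposite shell, the saturation argument at $u_0$ showing every $v'\in C'$ has at least $t-1$ neighbours in $A\cup B$ (and symmetrically for $C$), the refinements $B=B_1\cup B_2$ and $C=C_1\cup C_2$, and the final accounting that charges $t-1$ to each vertex of $U'\setminus A'$ and $s-1$ to each vertex of $U\setminus A$, with the surplus $(t-s)x$ coming from the shell and the $\min$ arising from applying the count to whichever side satisfies $|C_2|\le|C_2'|$. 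All of that is the paper's proof.

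There is, however, one genuine gap, and it is precisely the step you defer as ``the most delicate.'' The vertices of $C_2$ have fewer than $s-1$ neighbours outside $B_2'$ by definition, and their edges into $B_2'$ cannot be reused because those are already consumed by the degree bound $e(A\cup B\cup C,B_2')\ge(t-1)|B_2'|$ counted from the other side; so on the face of it $C_2$ simply cannot pay $s-1$ per vertex, and no amount of ``optimizing over the worst overlap'' produces a bound without a new structural input. The paper's missing idea is that $C_2$ and $C_2'$ span a \emph{complete} bipartite graph: a missing edge $vv'$ with $v\in C_2$, $v'\in C_2'$ would, by saturation and the definitions of $C_2,C_2'$, force a vertex $w\in B_2$ of the new copy to have at least $t-s$ neighbours in $B_2'$, contradicting the definition of $B_2$. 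This gives $e(C_2,C_2')=yy'$ with $y=|C_2|\le y'=|C_2'|$, so the deficit against $(s-1)y$ is $y(s-1)-yy'=y(s-1-y')\le y(s-1-y)\le\floor{(s-1)^2/4}$ --- which is exactly where your $\max_{a+b=s-1}ab$ comes from. Without identifying this completeness claim (and the contradiction through $B_2$ that proves it), the loss term has no source and the bound does not close; with it, your plan becomes the paper's proof.
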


\begin{proof} By the construction of $\tilde{A}$, we know that it spans at least $e+(t-1)(x+x'-x_0-x_0')$ edges. Indeed, each vertex we added to the shell brings at least $t-1$ new edges. The idea is to count $t-1$ edges from the remaining vertices on one side of the graph, say $U'-A'$, and then to find $s-1$ new (yet uncounted) edges from the other side, $U-A$. Of course, if a vertex in $U-A$ has at least $s-1$ neighbors in $A'$, then these edges are guaranteed to be new.

So let us continue with our definition of the structure. We know that any vertex in $U-A$ has fewer than $t-1$ neighbors in $A'$. We break this set into two parts by defining $B$ to be the set of vertices in $U-A$ having at least $s-1$ neighbors in $A'$, and $C$ to be those having fewer than $s-1$ neighbors in $A'$. Similarly we break $U'-A'$ into two sets $B'$ and $C'$ based on the size of the neighborhood in $A$. We need to break $B$ further into two parts $B_1$ and $B_2$, by defining $B_1$ to be the set of vertices having at least $t-s$ neighbors in $B'$. Similarly, let $B_1'$ be the set of vertices in $B'$ having at least $t-s$ neighbors in $B$ (see Figure 1).

Note that any vertex in $B_1'$ already has $t-1$ neighbors in $A\cup B$ (at least $s-1$ in $A$ and at least $t-s$ in $B$), but this is not necessarily true for $B_2'$. This, together with our strategy to find $s-1$ new edges from the vertices in $C$ motivates our last partitioning: We now break $C$ into two parts $C_1$ and $C_2$, where $C_1$ is the set of those vertices in $C$ which have at least $s-1$ neighbors outside $B_2'$, and $C_2=C-C_1$. We similarly define $C_1'=\{v\in C': |N(v)-B_2|\ge s-1\}$, where $N(v)$ is the neighborhood of $v$, and $C_2'=C'-C_1'$.

An observation here, which will prove to be crucial when counting the edges, is that $C_2$ and $C_2'$ span a complete bipartite graph. Indeed, suppose there is a missing edge $vv'$ in $G$, with $v\in C_2$ and $v'\in C_2'$. Adding this edge creates a $K_{(s,t)}$ or a $K_{(t,s)}$, suppose it is a $K_{(s,t)}$. Then $v'$ is connected to all the $s-1$ vertices other than $v$ in the $s$-vertex class of this $K_{(s,t)}$. But $v'$ is in $C_2'$, so it has at most $s-2$ neighbors outside $B_2$, consequently there is a vertex $w\in B_2$ in the $s$-class. Similarly, using that $v$ is in $C_2$, we find at least $t-s$ vertices of the $t$-class in $B_2'$. But then $w\in B_2$ has at least $t-s$ neighbors in $B_2'\subs B'$, which contradicts the definition of $B_2$. The same argument leads to a contradiction if the edge creates a $K_{(t,s)}$, hence we can conclude that there is no missing edge between $C_2$ and $C_2'$.

On another note, observe that adding the edge $u_0v'$, where $u_0$ is the vertex in $A_0$ defined in the property of the core and $v'$ is any vertex in $C'$, cannot create a $K_{(s,t)}$. Indeed, if it created a $K_{(s,t)}$, then all the vertices of the $t$-class except $v'$ are neighbors of $u_0$, so they are sitting in the core, $A_0'$. This means that each vertex in the $s$-class is connected to at least $t-1$ vertices in the core, hence the whole $s$-class is in $A$. But then $v'$ has at least $s-1$ neighbors in $A$, contradicting $v'\in C'$. So we see that adding $u_0v'$ creates a $K_{(t,s)}$. Then, all the vertices of the $s$-class of this copy of
$K_{(t,s)}$ except $v'$ are in $A_0'$, therefore the vertices of the $t$-class have at least $s-1$ neighbors in $A'$. Hence all of them are in $A\cup B$, implying that every $v'\in C'$ has at least $t-1$ neighbors in $A\cup B$. The same argument shows that each $v\in C$ has at least $t-1$ neighbors in $A'\cup B'$.

Lemma~\ref{lem:main} will now follow from the following claim, possibly applied to the graph with the two vertex classes switched.

\begin{CLAIM} \label{lem:sub}
Assuming $\delta\ge t-1$, suppose $|C_2|\le |C_2'|$. Then
\[ e(U,U') \ge n(s+t-2) - (x_0+x_0')(t-1) - \floor{\frac{(s-1)^2}{4}} + e + (t-s)x. \]
\end{CLAIM}
\begin{proof} Let $y=|C_2|$ and $y'=|C_2'|$, and let us count the edges in $G$. We noted above that each vertex in $B_1'$ has at least $t-1$ neighbors in $A\cup B$, so $e(A\cup B,B_1')\ge (t-1)|B_1'|$. By assumption, each vertex in $B_2'$ has degree at least $t-1$, hence $e(A\cup B\cup C, B_2')\ge (t-1)|B_2'|$. We have also shown that each vertex in $C'$ has at least $t-1$ neighbors in $A\cup B$, so $e(A\cup B,C')\ge (t-1)|C'|$. This so far means that
\begin{equation}    \label{eq:bottom}
e(A\cup B,B_1') + e(A\cup B\cup C, B_2') + e(A\cup B, C') \ge (t-1)(n-x').
\end{equation}
Now look at what we have left from the other side: By definition, any vertex in $B$ has at least $s-1$ neighbors in $A'$, so $e(B,A')\ge (s-1)|B|$. We also defined $C_1$ so that its vertices have at least $s-1$ neighbors outside $B_2'$, this gives $e(C_1,A'\cup B_1'\cup C')\ge (s-1)|C_1|$. As we noted above, the vertices of $C_2$ are all connected to the vertices of $C_2'$, so $e(C_2,C_2')=yy'$. Using the fact that $y(s-1-y)\le \floor{\frac{(s-1)^2}{4}}$ ($y$ is an integer), we get that
\begin{align}    \label{eq:top}
e(B,A') + e(C_1,A'\cup B_1'\cup C') + e(C_2,C_2') &\ge (s-1)(n-x-y) + yy' \\
    &\ge (s-1)(n-x) - (s-1)y +y^2     \nonumber \\
    &\ge (s-1)(n-x) - \floor{\frac{(s-1)^2}{4}}.    \nonumber
\end{align}
We have also seen that $e(A,A')$ is at least $e+(t-1)(x+x'-x_0-x_0')$.

It is easy to check that we never counted an edge more than once above, hence
\begin{align}
e(U,U') &\ge (t-1)(n-x') + (s-1)(n-x) + (t-1)(x+x'-x_0-x_0') + e - \floor{\frac{(s-1)^2}{4}} \\
        &= n(t+s-2) + (t-s)x - (x_0+x_0')(t-1) + e - \floor{\frac{(s-1)^2}{4}},    \nonumber
\end{align}
what we wanted to show.
\end{proof}
\vspace{-1em}
\end{proof}

We state the following immediate corollary of this claim, which we need in Section 4.

\begin{COR} \label{cor:sub}
If we have equality in Claim~\ref{lem:sub}, then the following statements hold:
\begin{itemize}
 \item any vertex in $B_1'\cup C'$ has exactly $t-1$ neighbors in $A\cup B$,
 \item any vertex in $B$ has exactly $s-1$ neighbors in $A'$,
 \item the vertices in $C_1$ have exactly $s-1$ neighbors outside $B_2'$, and
 \item $y(s-1)-yy'=\floor{(s-1)^2/4}$.
\end{itemize}
\end{COR}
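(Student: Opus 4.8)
The plan is to obtain the corollary by tracing the equality case backwards through the proof of Claim~\ref{lem:sub}. The point is that the final bound there was assembled additively: it is the sum of the three edge-count lower bounds collected in \eqref{eq:bottom}, the three collected in \eqref{eq:top}, and the bound $e(A,A')\ge e+(t-1)(x+x'-x_0-x_0')$, together with the single arithmetic inequality $y(s-1-y)\le\floor{(s-1)^2/4}$ used to pass from the second to the third line of \eqref{eq:top}. The proof already certifies that all these edge sets are pairwise disjoint (``we never counted an edge more than once''), so that
\[ e(U,U') \ \ge\ \textstyle\sum_i\big(\text{the $i$-th edge count}\big) \ \ge\ \sum_i\big(\text{the $i$-th lower bound}\big), \]
and the right-hand side is exactly the bound of Claim~\ref{lem:sub}. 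Since every slack above is nonnegative and, under equality, the total slack is zero, each individual inequality must be tight. This is the whole engine of the argument.

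Having isolated the per-piece equalities, I would read off the first three bullets by converting each aggregate equality into a per-vertex statement using the individual degree lower bounds already established inside the proof of Lemma~\ref{lem:main}. Tightness of $e(A\cup B,B_1')=(t-1)|B_1'|$ and $e(A\cup B,C')=(t-1)|C'|$, together with the facts that every vertex of $B_1'$ and every vertex of $C'$ has at least $t-1$ neighbors in $A\cup B$, forces each such vertex to have \emph{exactly} $t-1$ neighbors there, giving the first bullet. The same reasoning applied to $e(B,A')=(s-1)|B|$ (every vertex of $B$ has at least $s-1$ neighbors in $A'$ by definition) yields the second bullet, and applied to $e(C_1,A'\cup B_1'\cup C')=(s-1)|C_1|$ yields the third; here one notes that $A'\cup B_1'\cup C'=U'-B_2'$, so this is precisely the statement about neighbors outside $B_2'$, and the defining property of $C_1$ supplies the matching lower bound of $s-1$.

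Finally I would extract the fourth bullet from the arithmetic alone. Equality in Claim~\ref{lem:sub} forces the chain $(s-1)(n-x-y)+yy' \ge (s-1)(n-x)-(s-1)y+y^2 \ge (s-1)(n-x)-\floor{(s-1)^2/4}$ to hold with equality throughout, and these two equalities together say exactly that $y(s-1)-yy'=\floor{(s-1)^2/4}$. I do not anticipate any genuine obstacle: the only points requiring care are to confirm that the collection of edge sets is disjoint and that no uncounted edges contribute (so that zero total slack truly forces zero slack in each piece), and to remember that each aggregate equality must be paired with the corresponding per-vertex degree bound from Lemma~\ref{lem:main} before it can be upgraded to the ``exactly'' statements in the corollary.
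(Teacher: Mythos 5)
Your proposal is correct and is exactly the argument the paper intends: the Corollary is stated as an immediate consequence of the proof of Claim~\ref{lem:sub}, namely that equality in the additively assembled bound forces equality in each constituent inequality, which combined with the per-vertex lower bounds yields the ``exactly'' statements and the arithmetic identity. Your tracing of the equality chain in \eqref{eq:top} for the fourth bullet and the identification $A'\cup B_1'\cup C'=U'-B_2'$ for the third are precisely the details the paper leaves to the reader.
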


Now we are ready to prove our general theorem, which is tight up to an additive constant. Let us emphasize, however, that since our methods do not give the exact result, we will not make any effort to optimize the constant error term.

\begin{THM} \label{thm:main}
If $G=(U,U',E)$ is a $K_{s,t}$-saturated bipartite graph with $n$ vertices on each side, then it contains at least $(s+t-2)n-(s+t-2)^2$ edges.
\end{THM}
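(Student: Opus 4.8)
The plan is to split on the minimum degree $\delta$ of $G$. If $\delta<t-1$, Proposition~\ref{prop:smalldeg} already delivers the bound $n(s+t-2)-(s+t-2)^2$, so the entire difficulty is the regime $\delta\ge t-1$, where I intend to apply Lemma~\ref{lem:main} to a well-chosen core. First I would clear away two easy cases by a crude degree count: if every vertex of $U$ has degree at least $s+t-2$ then $e(U,U')=\sum_{u\in U}\deg(u)\ge(s+t-2)n$ and we are done, and symmetrically for $U'$. Thus I may assume there are vertices $u_0\in U$ and $u_0'\in U'$ with $t-1\le\deg(u_0),\deg(u_0')\le s+t-3$.

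With such low-degree vertices at hand I would build the smallest legal core anchored at them, namely $A_0=\{u_0\}\cup N(u_0')$ and $A_0'=\{u_0'\}\cup N(u_0)$; this satisfies the core property because $N(u_0)\subs A_0'$ and $N(u_0')\subs A_0$. Then $x_0\le\deg(u_0')+1$ and $x_0'\le\deg(u_0)+1$, so $x_0+x_0'\le 2(s+t-2)$, while every edge incident to $u_0$ or $u_0'$ falls inside the core, giving $e=e(A_0,A_0')\ge\deg(u_0)+\deg(u_0')-1$ (taking $u_0u_0'$ to be an edge keeps the core smallest, but this is not essential). Substituting into Lemma~\ref{lem:main} and using $x\ge x_0\ge t-1$ and $x'\ge x_0'\ge t-1$, so that the bonus $\min\{(t-s)x,(t-s)x'\}$ is at least $(t-s)(t-1)$, the theorem reduces to the inequality $(x_0+x_0')(t-1)-e-(t-s)\min\{x,x'\}+\floor{(s-1)^2/4}\le(s+t-2)^2$.

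Establishing this last inequality is the crux, and it is where I expect the genuine work to lie. When $s<t$ the bonus $(t-s)\min\{x,x'\}$ is linear in the core size and easily swallows the surplus from $(x_0+x_0')(t-1)$, so the residual is comfortably below $(s+t-2)^2$. The dangerous case is the balanced one $s=t$, when this bonus disappears and shell growth no longer helps; here the crude estimate $e\ge\deg(u_0)+\deg(u_0')-1$ is too weak once the anchor degrees approach $s+t-3$, and the residual can overshoot $(s+t-2)^2$. To repair this I would not pick the second anchor arbitrarily but exploit saturation: adding a missing edge at $u_0$ creates a new $K_{(s,t)}$ or $K_{(t,s)}$ through $u_0$, which plants an almost-complete bipartite graph---a $K_{s,t-1}$ (or $K_{t,s-1}$) whose large side sits inside $N(u_0)$---on $u_0$'s neighborhood. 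Choosing the second anchor $u_0'$ to be a low-degree vertex of this substructure forces the two neighborhoods $N(u_0),N(u_0')$ to span on the order of $s(t-1)$ edges between them, pushing $e$ up to a quadratic quantity that dominates the loss. The main obstacle is precisely this balancing act: the core must be kept small (so the anchors must be low-degree) and simultaneously dense (so the anchors must lie in the saturated substructure), and arguing that a single choice of anchors achieves both is the heart of the proof. Once it is in place, the residual is bounded by $(s+t-2)^2$ in all cases, and together with Proposition~\ref{prop:smalldeg} this gives the theorem.
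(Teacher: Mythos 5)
Your skeleton is exactly the paper's: dispose of $\delta<t-1$ via Proposition~\ref{prop:smalldeg}, find low-degree anchors $u_0\in U$, $u_0'\in U'$, take the core $A_0=\{u_0\}\cup N(u_0')$, $A_0'=\{u_0'\}\cup N(u_0)$, and feed it to Lemma~\ref{lem:main}. Your reduction is also correct as far as it goes: since $x_0,x_0'\le s+t-2$ and (say) $x_0\le x_0'$, one has $(x_0+x_0')(t-1)-(t-s)\min\{x_0,x_0'\}=x_0(s-1)+x_0'(t-1)\le (s+t-2)^2$, so after Lemma~\ref{lem:main} everything hinges on showing $e=e(A_0,A_0')\ge\floor{(s-1)^2/4}$. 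This is where the gap is. Your only proved bound is $e\ge \deg(u_0)+\deg(u_0')-1\ge 2t-3$, which is linear while the requirement is quadratic in $s$; it fails for instance when $s=t=10$ (need $e\ge 20$, have $e\ge 17$), and contrary to your case analysis the problem is not confined to $s=t$ (e.g.\ $s=10$, $t=11$ also fails). The repair you sketch --- pick $u_0'$ as a low-degree vertex of the $K_{s,t-1}$ planted on $N(u_0)$ by saturation --- is exactly the step you admit you cannot yet carry out, and it has a real obstruction: nothing guarantees that any vertex of that substructure has degree at most $s+t-3$, so the "balancing act" may have no solution with that choice of anchor.

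The paper resolves this with a different, and much cleaner, choice: take $u_0'$ to be a low-degree vertex \emph{non-adjacent} to $u_0$. Such a vertex exists because if every $u'\in U'-N(u_0)$ had degree at least $s+t-2$, then $e(G)\ge (n-(s+t-3))(s+t-2)>(s+t-2)n-(s+t-2)^2$ and we are already done. Now saturation applied to the missing edge $u_0u_0'$ produces a copy of $K_{s,t}$ containing both anchors; every other vertex of this copy is adjacent to $u_0$ or to $u_0'$, hence lies in $N(u_0)\cup N(u_0')\subs \tilde{A_0}$, so the core spans $e\ge st-1$ edges. Since $st-1\ge\floor{(s-1)^2/4}$ for all $1\le s\le t$, this closes the argument. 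So the missing idea is precisely the non-adjacency of the two anchors, which converts the saturation property into a quadratic lower bound on the core's edge count; without it (or a worked-out substitute) your proof does not go through for large $s$.
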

\begin{proof}
Following Lemma~\ref{lem:main}, our plan is to find an appropriate core.

By Proposition \ref{prop:smalldeg}, we may assume that the minimum degree of our graph is at least $t-1$. Suppose for contradiction that $G$ contains fewer than $(s+t-2)n-(s+t-2)^2$ edges. Then there is a vertex $u_0\in U$ of degree at most $s+t-3$. Moreover, there is a non-adjacent vertex $u_0'\in U'-N(u_0)$ of degree at most $s+t-3$ as well, since otherwise the number of edges in $G$ would be at least $(n-(s+t-3))(s+t-2) > (s+t-2)n-(s+t-2)^2$, contradicting our assumption. Set $A_0= \{u_0\} \cup N(u_0')$ and $A_0'= \{u_0'\}\cup N(u_0)$, and define $\tilde{A_0}=A_0\cup A_0'$ to be the core.

Using the above notation, we see that $x_0 = |A_0| = 1+|N(u_0')| \le s+t-2$ and $x_0'=|A_0'|=1+|N(u_0)| \le s+t-2$. Since $u_0$ and $u_0'$ are not adjacent, we can add the edge $u_0u_0'$ to create a new $K_{s,t}$. Notice that all the vertices of this $K_{s,t}$ are adjacent to either $u_0$ or $u_0'$, hence they all lie in the core. Consequently, the core spans $e=e(A_0, A_0') \ge st-1$ edges. Now applying Lemma~\ref{lem:main} we get
\begin{align}
e(U,U') &\ge n(s+t-2) - (x_0+x_0')(t-1) + \min\{(t-s)x,(t-s)x'\} - \floor{\frac{(s-1)^2}{4}} + e \\
 &\ge n(s+t-2) - (x_0+x_0')(t-1) + \min\{(t-s)x_0,(t-s)x_0'\} - \floor{\frac{(s-1)^2}{4}} + st-1 \nonumber \\
 &\ge n(s+t-2) - (s+t-2)^2 + st-1 - \floor{\frac{(s-1)^2}{4}} \nonumber \\
 &\ge n(s+t-2) - (s+t-2)^2. \nonumber
\end{align}
This contradicts the assumption, thus proving the theorem.
\end{proof}

\section{Extremal graphs}  \label{sec:example}

As we mentioned in the introduction, Moshkovitz and Shapira \cite{MS12} constructed a $K_{s,t}$-saturated $n$-by-$n$ bipartite graph showing that
the bound of the Conjecture~\ref{conj:main}, if true, is tight. It appears that this example is not unique. In this section
we describe a general family of such graphs which contains the example by Moshkovitz and Shapira as a special case (when $l=1$).

\begin{example} As usual, we denote the two sides of the bipartite graph by $U$ and $U'$, where $|U|=|U'|=n$. Let us break each class into two major parts: $U=V\cup W$ and $U'=V'\cup W'$, where $|V|=|V'|= \floor{\frac{t+s-2}{2}}$ (assume $n$ is large enough). Suppose $W$ and $W'$ are further broken into some parts $W_1,\ldots, W_l$ and $W_1', \ldots, W_l'$ where $|W_i|=|W_i'|\ge t-s$ for all $i$. The construction of an extremal graph $G$ goes as follows.

First include in $G$ all the edges between $V$ and $V'$, making it a complete bipartite graph. Also, for every $i$, choose the edges between $W_i$ and $W_i'$ to span an arbitrary $(t-s)$-regular graph. It remains to describe the edges going between different type of classes.

We do not include any edge between $W_i$ and $W_j'$ for any $i\ne j$. Instead, choose arbitrary sets $S' \subs V'$ and $S_1, \ldots, S_l \subs V$ of size $s-1$, and take all edges going between $W_i$ and $S'$ as well as the edges between $S_i$ and $W_i'$, for all $i$. A straightforward computation shows that the number of edges in this $G$ is exactly the number in the conjecture. We claim that $G$ is $K_{s,t}$-saturated.

Let us see what happens when we add a missing edge $uu'$ to $G$. If $u'\in W'$, i.e. $u'\in W_i'$ for some $i$, then let $N$ be the set of its $t-s$ neighbors in $W_i$. Since $u\in U - N - S_i$, the set $S_i\cup \{u\}\cup N\cup S'\cup \{u'\}$ then forms a $K_{(t,s)}$. On the other hand, if $u'\in V'$, then $u\in W_i$ for some $i$. Let $N'$ be the set of the $t-s$ neighbors of $u$ in $W_i'$, then $u'\in U' - N' - S'$ and hence the set $S_i\cup \{u\}\cup S'\cup N'\cup \{u'\}$ forms a $K_{(s,t)}$. This proves the saturation property.
\end{example}

The asymmetric structure of the above example comes from the relaxation of the $l=1$ case, which corresponds to the construction of
Moshkovitz and Shapira. When all the vertices in $W'$ are connected to the same subset of $V$ of size $s-1$, adding an edge between
$W$ and $W'$ creates both a $K_{(s,t)}$ and a $K_{(t,s)}$. Our example exploits the freedom we had in choosing the edges between $W'$ and $V$.
In our case, when $l>1$, adding an edge between $W_i$ and $W_j'$ with $S_i\ne S_j$ creates only a $K_{(t,s)}$. The existence of such asymmetric
examples provides further difficulties in proving an exact result.

\section{The $K_{2,3}$ case}   \label{sec:case}

For $t=s$, Conjecture \ref{conj:main} trivially follows from the ordered result by Bollob\'as \cite{B67}. The other extreme is also easy to handle. When $s=1$, the $K_{1,t}$-saturated property merely means that the vertices of degree less than $t-1$ span a complete bipartite graph. Then it is a simple exercise to show (see \cite{MS12}) that the conjecture holds in this case as well.

Thus the first open case is $s=2$ and $t=3$, where the conjecture asserts that any $K_{2,3}$-saturated graph contains at least $3n-2$ edges. We note that there are many saturated graphs on $3n-2$ edges. In fact, there are many such examples which are even $K_{(2,3)}$-saturated: Just take a vertex $v'\in U'$ that is connected to everything in $U$, and make sure that every other vertex in $U'$ has degree 2.

In this section we prove the matching lower bound. A brief summary of the coming theorem can be phrased as follows. By finding an appropriate core, our techniques from Section \ref{sec:main} easily give a $3n-3$ lower bound. The rest of the proof is then a series of small structural observations, ultimately ruling out the possibility that a $K_{2,3}$-saturated graph with $3n-3$ edges exists.

\begin{THM}
If $G=(U,U'; E)$ is a $K_{2,3}$-saturated bipartite graph with $n\ge 4$ vertices in each part, then it has at least $3n-2$ edges.
\end{THM}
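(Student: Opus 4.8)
The plan is to apply the general machinery of Section~\ref{sec:main} with $s=2$, $t=3$ to get a lower bound of $3n-3$, and then show that equality is impossible, which forces the true bound up to $3n-2$. For these parameters, $s+t-2 = 3$ and $\lfloor (s-1)^2/4 \rfloor = 0$, so the error term in Claim~\ref{lem:sub} vanishes, making the bound quite sharp. First I would set up the core exactly as in the proof of Theorem~\ref{thm:main}: assuming for contradiction that $G$ has at most $3n-3$ edges, the minimum degree is at least $t-1=2$ (else Proposition~\ref{prop:smalldeg} already gives $3n-9 > 3n - 3$ edges for large $n$, a contradiction), and there exist non-adjacent low-degree vertices $u_0, u_0'$ of degree at most $s+t-3 = 2$. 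Building the core $\tilde{A_0}$ from these and running Lemma~\ref{lem:main} yields $e(U,U') \ge 3n - 3 + (\text{nonnegative terms})$, so under the standing assumption $e(U,U') = 3n-3$ exactly, and every inequality in the chain must be tight.

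The heart of the proof is then to extract strong structural constraints from this equality and derive a contradiction. I would use Corollary~\ref{cor:sub} to pin down the local degrees: every vertex in $B_1' \cup C'$ has exactly $t-1=2$ neighbors in $A \cup B$, every vertex in $B$ has exactly $s-1=1$ neighbor in $A'$, every vertex in $C_1$ has exactly one neighbor outside $B_2'$, and the last bullet $y(s-1)-yy' = 0$ simplifies (since $s-1=1$) to $y(1-y')=0$, forcing $y = |C_2| = 0$ or $y' = |C_2'| = 1$. The tightness also forces $(t-s)x = x$ to have been used with $x = x_0$ (the shell added no edges beyond the core), and $e = st - 1 = 5$ exactly, meaning adding $u_0u_0'$ creates precisely one new $K_{2,3}$. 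Combining these rigid conditions, I expect to show that the core and shell must have a very specific small structure, and that the vertices of $C$ and $B$ are forced into configurations that either violate the $K_{2,3}$-saturation property (some missing edge fails to create a copy) or contradict the exact degree counts, e.g. by producing a vertex with too many or too few neighbors in $A'$.

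The main obstacle will be the case analysis around the sets $B_1, B_2, B_1', B_2', C_1, C_2$ under the equality conditions: the asymmetric examples of Section~\ref{sec:example} show that a $K_{2,3}$-saturated graph genuinely can achieve the conjectured $3n-2$, so any argument ruling out $3n-3$ must be delicate enough to survive all the near-extremal configurations while excluding exactly the one-edge-cheaper possibility. Concretely, I anticipate the difficulty lies in the interaction between the "complete bipartite between $C_2$ and $C_2'$" observation and the degree-exactness from Corollary~\ref{cor:sub}: one must track how a single extra incidence would propagate through the counting and show it cannot be absorbed. I would proceed by assuming equality throughout, listing the (small number of) possibilities for where the nonnegative slack terms came from, and in each branch locating a missing edge $uu'$ whose addition creates neither a $K_{(2,3)}$ nor a $K_{(3,2)}$ -- contradicting saturation -- or else double-counting an edge that the tight analysis assumed was counted once. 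Since the slack is exactly one edge, I expect the contradiction to come down to showing that no $K_{2,3}$-saturated graph can have the core span exactly $st-1 = 5$ edges while simultaneously meeting all four equality conditions of Corollary~\ref{cor:sub}.
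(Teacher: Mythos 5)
Your overall strategy matches the paper's: run the Section~\ref{sec:main} machinery with $s=2$, $t=3$ to get $3n-3$, then use the equality conditions of Corollary~\ref{cor:sub} to rule out a $(3n-3)$-edge graph. However, there are two concrete errors in the part you do spell out, and the decisive part is not spelled out at all. First, the minimum-degree reduction fails as written: Proposition~\ref{prop:smalldeg} gives only $n(t+s-2)-(s+t-2)^2 = 3n-9$ edges, and $3n-9 > 3n-3$ is false, so that proposition cannot eliminate degree-$1$ vertices when you are chasing the exact bound. The paper needs a separate, sharper argument here (its Lemma~\ref{lem:lem1}): a bespoke count around a degree-$1$ vertex $u$ that extracts $2n-2$ edges from the forced $K_{(3,2)}$'s, plus $|X|$ edges to $u'$ and $n-|X|$ further edges, totalling $3n-2$. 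Second, your arithmetic for the main bound is off by one: with $x_0=x_0'=3$ the core contributes $-(x_0+x_0')(t-1)=-12$ and $\min\{(t-s)x,(t-s)x'\}\ge 3$, so $e=st-1=5$ yields only $3n-4$. To reach $3n-3$ you must observe that the core actually spans at least $6$ edges (the new $K_{2,3}$ created by $u_0u_0'$ has $5$ edges, and the four edges $u_0u_1', u_0u_2', u_1u_0', u_2u_0'$ force at least one more), which is exactly what the paper does.

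Beyond that, the proposal stops at the point where the real work begins. The paper's proof of the $3n-2$ bound is a chain of structural lemmas under the equality hypothesis: a symmetry argument disposing of $|C_2|=|C_2'|$ (which forces $x=x'=3$ and leads to a vertex of $B_1'$ with too many neighbors in $B$); a lemma showing all of $B$ attaches to a single vertex of $A'$ (via the observation that no vertex of $B'\cup C'$ can have neighbors in two different parts $B_{w'}$); and a final two-case analysis of whether $u_0u_0'$ creates a $K_{(3,2)}$ or a $K_{(2,3)}$, where the second case is killed by \emph{re-choosing a smaller core} $\bar{A}=\{u_0,u_1,u_1',u_2'\}$ with $x_0=x_0'=2$ and $e=4$ and reapplying Lemma~\ref{lem:main} to get $3n-2$ directly. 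None of these steps is predictable from the general framework alone, and your proposal only gestures at ``locating a missing edge whose addition creates no copy'' without identifying which edges or which configurations. As it stands the proposal is a plausible plan with two fixable but genuine quantitative errors and a missing core argument, not a proof.
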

\begin{proof}
As a first step, we show in the spirit of Proposition \ref{prop:smalldeg} that it is enough to consider graphs of minimum degree 2.

\begin{LEMMA} \label{lem:lem1}
If $G$ contains fewer than $3n-2$ edges, then it has minimum degree 2. Moreover, it contains two non-adjacent vertices $u_0\in U$ and $u_0'\in U'$ of degree 2.
\end{LEMMA}
\begin{proof} The saturation property ensures that each vertex has at least one neighbor. Suppose there is a vertex $u$ of degree 1 -- wlog $u\in U$ -- and let $u'\in U'$ be its neighbor. Take any vertex $v'\in U'$ other than $u'$, then adding the edge $uv'$ cannot create a $K_{(2,3)}$, so it must create a $K_{(3,2)}$, with the 2-vertex class being $\{u',v'\}$. For any such $v'$, let $U_{v'}\subs U$ be the 3-class of this $K_{(3,2)}$, so $U_{v'}$ consists of $u$ and two neighbors of $v'$. We count the two edges between $v'$ and $U_{v'}$ for each $v'\in U'$, $v'\ne u'$ to get a total of $2n-2$ different edges.

Now let $X=\cup U_{v'}$, then every vertex in $X$ is connected to $u'$ because each of the above $K_{(3,2)}$'s contains $u'$. This gives $|X|$ new edges. On the other hand, we still have not encountered any edges touching $U-X$. But since we know that each vertex has at least one neighbor, we surely have at least $n-|X|$ new edges. This is already a total of $3n-2$ edges in $G$, contradicting our assumption.

Therefore the minimum degree is at least 2, but in fact it is exactly two, as otherwise we would have at least $3n$ edges in the graph. Let $u_0$ have degree 2 -- we may assume $u_0\in U$. If every non-adjacent vertex in $U'$ has at least 3 neighbors, then we have $2\cdot 2+ 3(n-2)=3n-2$ edges incident to $U'$, again a contradiction. Hence there is a $u_0'\in U'$ of degree 2 that is not adjacent to $u_0$, and we are done.
\end{proof}

Suppose $G$ is a counterexample to our theorem, and apply Lemma~\ref{lem:lem1} to get two non-adjacent vertices $u_0$ and $u_0'$ of degree 2. Denote the neighbors of $u_0$ by $u_1', u_2'\in U'$, the neighbors of $u_0'$ by $u_1, u_2\in U$, and let $A_0=\{u_0,u_1,u_2\}$ and $A_0'=\{u_0',u_1',u_2'\}$ be the core of the structure we described at the beginning of Section~\ref{sec:main}. Using this core we will also construct the sets $A$, $B=B_1\cup B_2$, $C=C_1\cup C_2$ and $A'$, $B'=B_1'\cup B_2'$, $C'=C_1'\cup C_2'$ as defined by the structure.

Assume that $|C_2|\le |C_2'|$ and apply Claim~\ref{lem:sub} with $s=2$ and $t=3$ to the structure of core $\tilde{A_0}=A_0\cup A_0'$. These choices for $s$ and $t$ significantly simplify the bound we get from this claim:
\[ e(U,U')\ge 3n - 6\cdot 2 - 0 + e + x = 3n-12+e+x. \]
Using that the addition of the edge $u_0u_0'$ creates a $K_{2,3}$ inside the core (as all neighbors of $u_0$ and $u_0'$ are in $\tilde{A_0}$), it is easy to check that $e=e(A_0,A_0')\ge 6$. We also know that $x\ge x_0=3$, so $e(U,U')\ge 3n-3$. Then these inequalities together with Corollary~\ref{cor:sub} imply that if $e(U,U') = 3n-3$ then $G$ satisfies the following five properties:

\begin{enumerate}
 \item $e=e(A_0,A_0')=6$ and $x=|A|=3$,
 \item any vertex in $B_1'\cup C'$ has exactly $2$ neighbors in $A\cup B$,
 \item any vertex in $B$ has exactly $1$ neighbor in $A'$,
 \item the vertices in $C_1$ have exactly $1$ neighbor outside $B_2'$, and
 \item for $y=|C_2|$ and $y'=|C_2'|$ (with $0\le y\le y'$) we have $y(s-1)-yy'=y(1-y')=0$, so either $y=0$ or $y=y'=1$.
\end{enumerate}

The following lemma supplements the fifth property and shows that $C_2$ must be empty and $C_2'$ must be non-empty, by taking care of the case $y=y'=0$ and $y=y'=1$.

\begin{LEMMA} \label{lem:sym}
If $|C_2|=|C_2'|$ then $G$ spans at least $3n-2$ edges.
\end{LEMMA}
\begin{proof} As $G$ is a counterexample, by Lemma~\ref{lem:lem1} it has minimum degree 2. Since $y=y'$, we may apply Claim~\ref{lem:sub} and Corollary~\ref{cor:sub} to $G$'s ``mirror'', with $U$ and $U'$ switched, and observe that the five properties hold for this mirror graph as well. Then the first property gives $x=3$, $x'=3$ and $e=6$. So $A=\{u_0,u_1,u_2\}$ and $A'=\{u_0',u_1',u_2'\}$ (i.e. any vertex not in the core has at most one neighbor in it), and the core spans 6 edges. By symmetry we can assume that adding the edge $u_0u_0'$ creates a $K_{(2,3)}$ on the set $\{u_0,u_1,u_0',u_1',u_2'\}$, so the missing edges are $u_0u_0'$, $u_2u_1'$ and $u_2u_2'$. We also assumed that there is no vertex of degree 1, so $u_2$ must have some neighbor $v'$ in $B'$. Note that $v'$ has exactly one neighbor in $A$, in particular it is not connected to $u_1$.

Now let us see what happens when we add the edge $u_0v'$. We cannot create a $K_{(2,3)}$, because that would use both $u_1'$ and $u_2'$, but their only common neighbor other than $u_0$ is $u_1$ (recall that no vertex outside $A'$ can have $2$ neighbors in $A$), which is not connected to $v'$. So it must be a $K_{(3,2)}$, and it is not using $u_2$, as $u_2$ has no common neighbor with $u_0$. But then the $K_{(3,2)}$ contains two neighbors of $v'$ that are not in $A$, but are connected to a vertex in $A'$.
Then, by definition, these neighbors are in $B$. So $v'\in B_1'$ has at least two neighbors in $B$ and one in $A$, and this contradicts the second property.
\end{proof}

From now on we assume that $C_2$ is empty and $C_2'$ is non-empty. Then the fourth property also implies that the vertices in $C=C_1$ have exactly one neighbor outside $B_2'$. Moreover, the third property tells us that each vertex in $B$ has exactly one neighbor in $A'$.

\begin{LEMMA} \label{lem:lem3}
All vertices in $B$ are connected to the same vertex in $A'$.
\end{LEMMA}
\begin{proof}
Break $B$ into parts based on the neighbor in $A'$ by putting the vertices in $B$ connected to $w'\in A'$ into the set $B_{w'}$. We claim that vertices in different parts do not share common neighbors, or in other words, any vertex $v'\in B'\cup C'$ has all its neighbors in $B$ contained in the same part $B_{w'}$.

Indeed, any vertex in $B'$ has at most one neighbor in $B$: this is true by definition for the vertices in $B_2'$, and follows from the second property for $B_1'$ (every vertex in $B'_1$ has a neighbor in $A$). Now look at the vertices in $C'$. An easy observation in Lemma~\ref{lem:main} shows that adding the edge $u_0v'$ for $v'\in C'$ cannot create a $K_{(2,3)}$. So it creates a $K_{(3,2)}$, and this $K_{(3,2)}$ must contain the two neighbors of $v'$ in $B$ and a neighbor $w_0'$ of $u_0$ in $A'$. Hence both neighbors of $v'$ are in $B_{w_0'}$, establishing the claim.

As we noted above, $C'$ is not empty, so take a vertex $v_1'\in C'$ and assume that the neighbors of $v_1'$ in $B$ are in $B_{w_1'}$. We will show that $B=B_{w_1'}$. Suppose not, i.e. there is a $v_1\in B_{w_2'}$ with $w_1'\ne w_2'$. Then the edge $v_1v_1'$ is missing; let us see what happens when we add that edge. We create a $K_{(2,3)}$ or a $K_{(3,2)}$, so in any case there are vertices $v_2\in U$ and $v_2'\in U'$ such that $v_1v_2'$, $v_2v_2'$ and $v_2v_1'$ are all edges of $G$. Here $v_2$ cannot be in $A$, as it is connected to $v_1'\in C'$. It is not in $B$ either, since then one of $v_1'$ and $v_2'$ would have neighbors in both $B_{w_1'}$ and $B_{w_2'}$. So $v_2\in C=C_1$ (since $C_2$ is empty). Now the fourth property says that $v_2$ has exactly 1 neighbor outside $B_2'$. Since $v_1' \in C'$, $v_2'$ must be in $B_2'$. But the vertices in $B_2'$ have no neighbors in $B$ so $v_1v_2'$ cannot be an edge, giving a contradiction.
\end{proof}

Note that this lemma implies that one of the two neighbors of $u_0$ -- say $u_1'$ -- is not connected to any vertex in $B$, and therefore it is adjacent to at least two vertices in $A=A_0$. We also recall that the core only spans six edges.
It is time to analyze what happens in the core when we add the edge $u_0u_0'$. It might create a $K_{(3,2)}$ or a $K_{(2,3)}$, but the obtained graph is inside the core in both cases.

\textbf{Case 1:} $u_0u_0'$ creates a $K_{(3,2)}$.

\vspace{0.1cm}
\noindent
If this $K_{(3,2)}$ used $u_2'$, then the core of $G$ would contain more than 6 edges: 5 from the $K_{(3,2)}$ and 2 other edges incident to $u_1'$, which is impossible. So $u_1'$ is connected to both $u_1$ and $u_2$, while $u_2'$ is not connected to any of them. Note, however, that $u_2'$ is connected to all vertices in $B$.

Let $v$ be any vertex in $U-A$. When we add the edge $vu_0'$ to $G$, we create a $K_{(2,3)}$ or a $K_{(3,2)}$, so there is a vertex $v'$ connected to both $v$ and $u_1$ or $u_2$. Then $v'$ is not in $A'$, since $v$ is only connected to $u_2'$ in $A'$, but both $u_1u_2'$ and $u_2u_2'$ are missing. Thus $v'\in B'$ (it has a neighbor in $A$, so it is not in $C'$). When we add the edge $u_0v'$, we cannot create a $K_{(2,3)}$, because that would use both $u_1'$ and $u_2'$, which only share $u_0$ as their common neighbor. So it creates a $K_{(3,2)}$ using one of $u_1'$ and $u_2'$. It cannot be $u_1'$, because then the 3-class of the $K_{(3,2)}$ is exactly $A$, making $v'$ have 2 neighbors in $A$. Thus, by definition, $v'\in A'$ which contradicts $v' \in B'$. But it cannot be $u_2'$ either, because then $v'$ would have two neighbors in $B$, which together with a neighbor in $A$ that $v'$ must have, contradicts the second property. So this case is impossible.

\textbf{Case 2:} $u_0u_0'$ creates a $K_{(2,3)}$.

\vspace{0.1cm}
\noindent
Then one of $u_1$ and $u_2$ -- say $u_1$ -- is connected to both $u_1'$ and $u_2'$, and the other is connected to neither. But then $u_1'$ has exactly two neighbors, $u_0$ and $u_1$, and the set $\bar{A}=\{u_0,u_1,u_1',u_2'\}$ spans four edges. This means that we can apply Lemma~\ref{lem:main} taking $\bar{A}$ as the core, and $u_0$ and $u_1'$ being its ``distinguished'' vertices with their neighborhoods also sitting in the core. One can check that all conditions are satisfied, and with our new values of $x\ge x_0=2$, $x'\ge x_0'=2$ and $e=4$, we get that $G$ has at least $3n-8-0+4+2=3n-2$ edges. This contradiction finishes the proof of the theorem.
\end{proof}

\section{Concluding remarks}
\label{sec:conclusion}

Although we could slightly improve the error term in Theorem~\ref{thm:main}, it seems that more ideas are needed to
prove the full conjecture. We also note that our methods can be used to provide an asymptotically tight estimate on
the minimum number of edges in a $K_{s,t}$-saturated unbalanced bipartite graph (i.e., with parts of size $m$ and $n$).
Determining the precise value in this unbalanced case might be even more challenging, although we believe that a straightforward modification of the extremal construction from the balanced case is tight here as well.

Bipartite saturation results were generalized to the hypergraph setting in \cite{A85,MS12}, where $G$ and $H$ are assumed to be $k$-partite $k$-uniform hypergraphs, and $G$ is $H$-saturated if any new hyperedge meeting one vertex from each color class creates a new copy of $H$. It would be interesting to extend our results to get an asymptotically tight bound for the unordered $k$-partite hypergraph saturation problem.

\vspace{0.3cm}
\noindent
{\bf Acknowledgment.} We would like to thank A. Shapira for bringing this problem to our attention.

\end{document}